\newtheorem{prop}{Proposition}
\newtheorem{lemma}[prop]{Lemma}
\newtheorem{theorem}[prop]{Theorem}
\newtheorem{cor}[prop]{Corollary}
\theoremstyle{definition}
\newtheorem*{mydef}{Definition}
\newtheorem{ex}[prop]{Example}
\newtheorem{rem}[prop]{Remark}
\newcommand{\m}{\mathfrak{m}}
\newcommand{\p}{\mathfrak{p}}
\newcommand{\q}{\mathfrak{q}}
\newcommand{\Z}{\mathbb{Z}}
\newcommand{\Q}{\mathbb{Q}}
\renewcommand{\_}{\underline{\hspace*{0.2cm}}}
\DeclareMathOperator{\Min}{Min}
\DeclareMathOperator{\Ass}{Ass}
\DeclareMathOperator{\Spec}{Spec}
\DeclareMathOperator{\ch}{char}
\DeclareMathOperator{\Tor}{Tor}
\setlist[enumerate,1]{label={\upshape(\arabic*)}}
\begin{document}
\title{Flat maps to and from Noetherian rings}
\author{Justin Chen}
\address{Department of Mathematics, University of California, Berkeley,
California, 94720 U.S.A}
\email{jchen@math.berkeley.edu}

\subjclass[2010]{{13B40, 13E05}}

\begin{abstract}
We investigate flat maps where the source or target is a Noetherian ring,
giving necessary and/or sufficient conditions on a ring for such maps to 
exist. Along the way, we develop some general facts about flat ring maps, 
and exhibit many examples, including a new class of zero-dimensional local rings.
\end{abstract}

\vspace*{-0.3in}
\maketitle

Throughout $R$ denotes a commutative ring with $1 \ne 0$, and ring maps take 
$1$ to $1$. We consider the following questions: what are the rings $R$ such that
\vspace*{0.1cm}
\begin{align}
\text{there exists a Noetherian ring } S \text{ and a flat ring map } &R \to S \text{, or} \tag{*} \\
\text{there exists a Noetherian ring } S \text{ and a flat ring map } &S \to R. \tag{**}
\end{align}
\vspace*{0.05mm}

\vspace*{-0.2cm}
Certainly if $R$ is Noetherian, then $R$ has $(*)$ and $(**)$, by taking the identity map.
Moreover, by Eakin's theorem this must be the case if $R \to S$
resp. $S \to R$ is module-finite and injective. Thus we will focus primarily on
non-Noetherian rings $R$, and ring maps that are non-finite or non-injective.

We begin by considering property $(*)$. As a first example: for any ring $R$, there 
exists a ring $R'$ with $(*)$ surjecting onto $R$,
e.g. $R' := R \times \Z/2\Z$. The second projection $R' \twoheadrightarrow \Z/2\Z$ makes 
$\Z/2\Z$ into a finite flat (even projective) $R'$-module. 

The following necessary condition for $R$ to have $(*)$ states that 
associated primes of $R$-modules are contractions of associated primes over 
the Noetherian ring $S$:

\begin{prop}
Let $R \xrightarrow{\varphi} S$ be a flat map with $S$ Noetherian, let $M$ be an
$R$-module, and $\p \in \Ass_R(M)$. Then either $\p S = S$ or there exists
$\q \in \Ass_S(M \otimes_R S)$ with $\p = \q^c := \varphi^{-1}(\q)$.
\end{prop}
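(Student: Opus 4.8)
The plan is to realize $\p$ concretely as an annihilator, push the resulting embedding forward along the flat map $\varphi$, and then extract the required prime using the good behavior of associated primes over the Noetherian ring $S$ together with going-down for flat maps.

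First I would use that $\p \in \Ass_R(M)$ means $\p = \operatorname{Ann}_R(x)$ for some $x \in M$, which gives an injection $R/\p \hookrightarrow M$. Since $\varphi$ is flat, the functor $-\otimes_R S$ preserves injectivity, so I obtain an injection $S/\p S \cong (R/\p)\otimes_R S \hookrightarrow M \otimes_R S$ of $S$-modules. Because associated primes of a submodule are associated primes of the ambient module, this already yields $\Ass_S(S/\p S) \subseteq \Ass_S(M \otimes_R S)$, reducing the problem to producing a suitable $\q \in \Ass_S(S/\p S)$ whose contraction is exactly $\p$.

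Next I would split on the stated dichotomy. If $\p S = S$ we are in the first case and there is nothing to prove, so assume $\p S \neq S$. Then $S/\p S$ is a nonzero module over the Noetherian ring $S$, so $\p S$ admits at least one minimal prime $\q$; moreover $\Min(\p S) \subseteq \Ass_S(S/\p S)$, so any such $\q$ is automatically associated to $S/\p S$, hence to $M \otimes_R S$. By construction $\q \supseteq \p S \supseteq \varphi(\p)$, whence $\varphi(\p) \subseteq \q$ and so $\q^c \supseteq \p$ for free.

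The crux is the reverse inclusion $\q^c \subseteq \p$, and this is the only step where flatness does real work, via the fact that a flat map satisfies going-down. I would argue by contradiction: if $\q^c \supsetneq \p$, then $\p \subsetneq \q^c$ are primes of $R$ with $\q$ lying over $\q^c$, so going-down produces a prime $\q' \subsetneq \q$ of $S$ with $(\q')^c = \p$; but then $\varphi(\p) \subseteq \q'$ gives $\p S \subseteq \q'$, contradicting the minimality of $\q$ over $\p S$. Hence $\q^c = \p$, and $\q \in \Ass_S(M \otimes_R S)$ is the desired associated prime. I expect this last step---pinning the contraction down to $\p$ itself rather than merely to a prime containing it---to be the main obstacle, since everything preceding it is a formal consequence of flatness preserving injections and of the standard theory of associated primes over the Noetherian ring $S$.
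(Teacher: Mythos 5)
Your proof is correct, and the first half (realizing $\p$ as an annihilator, tensoring the injection $R/\p \hookrightarrow M$ along the flat map, and reducing to finding a suitable $\q \in \Ass_S(S/\p S)$) is exactly the paper's setup. Where you diverge is the crux you correctly identified: pinning down $\q^c = \p$. You take $\q$ minimal over $\p S$ (legitimately in $\Ass_S(S/\p S)$ since minimal primes of the support of a finitely generated module over a Noetherian ring are associated) and rule out $\q^c \supsetneq \p$ via going-down for flat maps plus minimality of $\q$. The paper instead shows the stronger fact that \emph{every} $\q \in \Ass_S(S/\p S)$ contracts to $\p$: any $a \in \q^c \setminus \p$ is a nonzerodivisor on the domain $R/\p$, so by flatness $\varphi(a)$ is a nonzerodivisor on $S/\p S$, contradicting the fact that elements of an associated prime are zerodivisors. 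The paper's argument is more elementary (it invokes only the preservation of injectivity under $-\otimes_R S$, not the going-down theorem) and yields more (all of $\Ass_S(S/\p S)$, not just the minimal primes of $\p S$, lies over $\p$); your argument trades that for a clean appeal to a standard structural property of flat maps. Both are complete proofs of the stated proposition.
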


\begin{proof}
$R/\p \hookrightarrow M \implies S/\p S \hookrightarrow M \otimes_R S$, so 
$\Ass_S(S/\p S) \subseteq \Ass_S(M \otimes_R S)$. If $S/\p S \ne 0$, then 
$\Ass_S(S/\p S)$ is nonempty as $S$ is Noetherian.
Then note that any $\q \in \Ass_S(S/\p S)$ contracts to $\p$: if $a \in \q^c \setminus \p$,
then $a$ is a nonzerodivisor on $R/\p$, so by flatness $\varphi(a)$ is a nonzerodivisor 
on $S/\p S$, contradicting $\varphi(a) \in \q$.
\end{proof}

We now record some general facts about flat ring maps, which will be useful in what follows. 
Recall that for any ring map $R \xrightarrow{\varphi} S$ and $\q \in \Spec S$, there is an 
induced local map $R_{\q^c} \to S_\q$, which is faithfully flat if $\varphi$ is flat.

\begin{lemma} \label{flatFacts}
Let $R \xrightarrow{\varphi} S$ be a flat ring map. Then:
\begin{enumerate}
\item For any $\q \in \Min S$, $\q^c \in \Min R$.
\item $\displaystyle \ker \varphi \subseteq \bigcap_{\q \in \Spec S} \ker(R \to R_{\q^c})$, 
and every minimal prime of $\ker \varphi$ has codimension $0$.
\end{enumerate}
\end{lemma}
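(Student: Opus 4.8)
The plan is to route everything through the induced local maps $R_{\q^c} \to S_\q$, which the preceding remark guarantees are faithfully flat whenever $\varphi$ is. The two parts are then driven by the fact that a faithfully flat ring map is both injective and surjective on spectra.

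For (1), fix $\q \in \Min S$. Minimality says $\q$ is the only prime of $S$ contained in $\q$, so $S_\q$ has a single prime and $\Spec S_\q$ is one point. Surjectivity of $\Spec S_\q \to \Spec R_{\q^c}$ then forces $\Spec R_{\q^c}$ to consist of the single point $\q^c R_{\q^c}$, so $R_{\q^c}$ is zero-dimensional and $\q^c \in \Min R$. For the containment in (2), fix $\q \in \Spec S$ and $x \in \ker\varphi$, and chase the commuting square: the image of $x$ under $R \to R_{\q^c} \to S_\q$ equals its image under $R \to S \to S_\q$, which is $0$. Since $R_{\q^c}\to S_\q$ is faithfully flat, hence injective, the image of $x$ in $R_{\q^c}$ already vanishes, so $x \in \ker(R\to R_{\q^c})$; as $\q$ was arbitrary this gives the inclusion.

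The real work is the last assertion, that every minimal prime $\p$ over $I := \ker\varphi$ has $\dim R_\p = 0$, and here I would localize and feed the outcome back into part (1). Base change along $R \to R_\p$ yields a flat map $R_\p \to S\otimes_R R_\p$ whose kernel is $IR_\p$ (localization is exact), so $R_\p/IR_\p$ embeds into $S\otimes_R R_\p$. Minimality of $\p$ over $I$ makes $\p R_\p$ the unique prime over $IR_\p$, whence $R_\p/IR_\p$ is a nonzero zero-dimensional local ring; in particular $S\otimes_R R_\p \ne 0$ and so has a minimal prime $\mathfrak Q$. Applying part (1) to $R_\p \to S\otimes_R R_\p$, the contraction of $\mathfrak Q$ lies in $\Min R_\p$; but as a contraction it contains $IR_\p$ and is contained in $\p R_\p$, so by uniqueness it equals $\p R_\p$. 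Thus $\p R_\p$ is at once a minimal prime and the maximal ideal of $R_\p$, forcing $\dim R_\p = 0$, i.e. $\p \in \Min R$.

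The one genuine obstacle is recognizing that the second statement of (2) reduces to (1) after localizing at $\p$: once $S\otimes_R R_\p$ is seen to be nonzero, its minimal prime contracts into the narrow band between $IR_\p$ and $\p R_\p$ and pins down $\p R_\p$ as minimal. The remaining points — that base change preserves flatness, that localization commutes with the kernel, and that minimality of $\p$ over $I$ makes $\p R_\p$ the unique prime over $IR_\p$ — are routine.
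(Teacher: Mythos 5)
Your proof is correct and follows essentially the same route as the paper: part (1) and the containment in (2) are argued identically via the faithfully flat local maps $R_{\q^c} \to S_\q$, and for the final assertion you simply spell out, by localizing at $\p$ and re-applying (1) to the base-changed map $R_\p \to S \otimes_R R_\p$, the step the paper states tersely (that every minimal prime of $\ker\varphi$ is the contraction of a minimal prime of $S$). No gaps; your version just makes the paper's implicit lying-over argument explicit.
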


\begin{proof}
(1): $R_{\q^c} \to S_\q$ is surjective on spectra, so $|\Spec R_{\q^c}| \le |\Spec S_\q| = 1$.

(2): For $\q \in \Spec S$, $\ker \varphi \subseteq \ker(R \to S \to S_\q) = \ker(R \to R_{\q^c} 
\to S_\q) = \ker(R \to R_{\q^c})$, since $R_{\q^c} \to S_\q$ is injective. For the second 
statement, note that for every minimal prime $\p$ of $\ker \varphi$, there exists 
$\q \in \Min S$ with $\q^c = \p$.
\end{proof}

\begin{rem}
More generally, \Cref{flatFacts}(1) and the second statement in \Cref{flatFacts}(2) hold for any 
ring map that satisfies going-down.
\end{rem}

\begin{cor}
Let $R$ be a ring. The following are equivalent:
\begin{enumerate}
\item Every flat map $R \to S$ is injective for any ring $S$
\item Every zerodivisor in $R$ is nilpotent, i.e. $0$ is primary
\item $R$ has a unique minimal prime $\p$ and $\{ \text{zerodivisors} \} = \p$.
\end{enumerate}
\end{cor}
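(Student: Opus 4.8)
The plan is to prove the cycle $(1) \Rightarrow (2) \Rightarrow (3) \Rightarrow (1)$, treating the equivalence of the two algebraic conditions $(2)$ and $(3)$ as standard commutative algebra and reserving the flatness input for the passages into and out of $(1)$. First I would record that $(2)$ is literally the statement that $0$ is a primary ideal: $ab = 0$ with $b \neq 0$ forcing $a$ nilpotent says exactly that every zerodivisor lies in $\mathrm{nil}(R) = \sqrt{0}$ (the reverse inclusion being automatic). Since the radical of a primary ideal is prime, $(2)$ makes $\sqrt{0}$ prime; as $\sqrt{0} = \bigcap_{\p \in \Min R} \p$, a prime nilradical is forced to be the unique minimal prime, and then the zerodivisors are exactly $\mathrm{nil}(R) = \p$. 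This yields $(2) \Rightarrow (3)$, and the converse is immediate: a unique minimal prime equals $\mathrm{nil}(R)$, so ``zerodivisors $= \p$'' is precisely the assertion that zerodivisors are nilpotent.

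For $(1) \Rightarrow (2)$ I would argue by contrapositive, constructing an explicit flat non-injective map. If $(2)$ fails, choose a zerodivisor $a$ that is not nilpotent, and pick $b \neq 0$ with $ab = 0$. Consider the localization $\varphi \colon R \to R_a = R[1/a]$, which is flat. Because $a$ is not nilpotent, $R_a \neq 0$, so this is an admissible target; but $a b = 0$ places $b$ in $\ker\varphi$, so $\varphi(b) = 0$ while $b \neq 0$. Thus $\varphi$ is a flat map that is not injective, contradicting $(1)$.

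For $(3) \Rightarrow (1)$, where \Cref{flatFacts} does the real work, let $\varphi \colon R \to S$ be an arbitrary flat map. Since $S \neq 0$ there is some $\q \in \Min S$, and by \Cref{flatFacts}(1) its contraction $\q^c$ is a minimal prime of $R$, hence equal to the unique minimal prime $\p$. By \Cref{flatFacts}(2), $\ker\varphi \subseteq \ker(R \to R_{\q^c}) = \ker(R \to R_\p)$, so it suffices to show this last kernel vanishes. If $x \in \ker(R \to R_\p)$ with $x \neq 0$, then $sx = 0$ for some $s \notin \p$, making $s$ a zerodivisor; but $(3)$ says the zerodivisors are exactly $\p$, forcing $s \in \p$, a contradiction. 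Hence $\ker(R \to R_\p) = 0$ and $\varphi$ is injective.

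I expect the substantive step to be $(3) \Rightarrow (1)$: the key insight is that flatness lets one funnel any flat map through the single localization $R \to R_\p$ at the unique minimal prime, after which the hypothesis ``zerodivisors $= \p$'' is precisely what makes that localization injective. The remaining pieces are comparatively routine, with the one point worth watching being in the $(1) \Rightarrow (2)$ construction, where one must select a \emph{non-nilpotent} zerodivisor to guarantee that the target $R_a$ is nonzero.
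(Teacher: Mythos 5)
Your proof is correct and takes essentially the same route as the paper: localization at a non-nilpotent zerodivisor for $(1) \Rightarrow (2)$, elementary facts about the nilradical for $(2) \Leftrightarrow (3)$, and \Cref{flatFacts} to funnel the kernel of an arbitrary flat map into $\ker(R \to R_\p)$ for $(3) \Rightarrow (1)$. The only cosmetic difference is in $(2) \Rightarrow (3)$, where you invoke ``the radical of a primary ideal is prime'' while the paper sandwiches the zerodivisors between $\bigcup_{\p \in \Min R} \p$ and $\bigcap_{\p \in \Min R} \p$; both are one-line arguments with the same content.
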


\begin{proof}
(1) $\implies$ (2): If $s \in R$ is a non-nilpotent zerodivisor, then $R \to R[s^{-1}]$ is not 
injective.

(2) $\implies$ (3): 
$\displaystyle \bigcup_{\p \in \Min R} \p \subseteq \{ \text{zerodivisors} \} \subseteq
\bigcap_{\p \in \Min R} \p \implies |\Min R| = 1$. 

(3) $\implies$ (1): Since $R \setminus \p$ consists of nonzerodivisors, $R \to R_\p$
is injective. If $R \xrightarrow{\varphi} S$ is flat, then by \Cref{flatFacts}
$\ker \varphi \subseteq \ker(R \to R_\p) = 0$. 
\end{proof}

We now give a characterization of the rings with $(*)$:

\begin{theorem} \label{thmNoethLocus}
Let $R$ be a ring. The following are equivalent:
\begin{enumerate}
\item $R$ has $(*)$
\item $R_\p$ is Noetherian for some $\p \in \Spec R$
\item $R_\p$ is Artinian for some $\p \in \Min R$.
\end{enumerate}
\end{theorem}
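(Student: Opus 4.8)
The plan is to run the cycle $(2) \Rightarrow (1) \Rightarrow (3) \Rightarrow (2)$, with essentially all of the content concentrated in the implication $(1) \Rightarrow (3)$. The two easy implications come first. For $(2) \Rightarrow (1)$, the localization map $R \to R_\p$ is flat for any $\p \in \Spec R$, so if some $R_\p$ is Noetherian it already witnesses $(*)$. For $(3) \Rightarrow (2)$, an Artinian ring is Noetherian, so a minimal prime $\p$ with $R_\p$ Artinian is in particular a prime with $R_\p$ Noetherian.

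For $(1) \Rightarrow (3)$, suppose $R \xrightarrow{\varphi} S$ is flat with $S$ Noetherian. Since $S \ne 0$ is Noetherian, $\Min S \ne \emptyset$; I would choose $\q \in \Min S$ and set $\p := \q^c$. By \Cref{flatFacts}(1), $\p \in \Min R$, and the induced local map $R_\p \to S_\q$ is faithfully flat (a flat local homomorphism of local rings is automatically faithfully flat, as recorded before the lemma). Moreover $S_\q$ is Noetherian, being a localization of the Noetherian ring $S$. I would then descend Noetherianity along $R_\p \to S_\q$ to conclude that $R_\p$ is Noetherian. Finally, since $\p$ is a minimal prime, $R_\p$ is zero-dimensional, and a zero-dimensional Noetherian local ring is Artinian; this gives $(3)$. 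Note that it is essential to pick $\q$ \emph{minimal} in $S$: that is exactly what forces $\p = \q^c$ to be minimal in $R$, and hence $R_\p$ to be zero-dimensional rather than merely Noetherian.

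The main obstacle — and the only step that is not formal — is the descent of the Noetherian property along the faithfully flat map $R_\p \to S_\q$. Writing $A := R_\p$ and $B := S_\q$, I would prove that if $A \to B$ is faithfully flat with $B$ Noetherian then $A$ is Noetherian, via the standard contraction identity: faithful flatness makes $A/I \to (A/I)\otimes_A B = B/IB$ injective for every ideal $I \subseteq A$, whence $IB \cap A = I$. Given an ascending chain $I_1 \subseteq I_2 \subseteq \cdots$ in $A$, the extended chain $I_1 B \subseteq I_2 B \subseteq \cdots$ stabilizes since $B$ is Noetherian, and contracting back through the identity $IB \cap A = I$ shows that the original chain stabilizes. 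Hence $A = R_\p$ is Noetherian, which finishes $(1) \Rightarrow (3)$ and closes the cycle.
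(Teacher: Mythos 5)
Your proposal is correct and follows essentially the same route as the paper: both prove the cycle via the easy implications $(3)\Rightarrow(2)\Rightarrow(1)$ and then establish $(1)\Rightarrow(3)$ by picking $\q \in \Min S$, contracting to $\p \in \Min R$ via \Cref{flatFacts}(1), and descending Noetherianity along the faithfully flat map $R_\p \to S_\q$ using the contraction identity $I = IS_\q \cap R_\p$ on an ascending chain. You merely spell out two points the paper leaves implicit, namely why $IB \cap A = I$ holds under faithful flatness and why a zero-dimensional Noetherian local ring is Artinian.
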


\begin{proof}
(3) $\implies$ (2): Clear. (2) $\implies$ (1): The localization map $R \to R_\p$ is flat.

(1) $\implies$ (3): Suppose $R \to S$ is flat with $S$ Noetherian. Pick $\q \in \Min S$,
so by \Cref{flatFacts}(1) $\p := \q^c \in \Min R$ and $R_\p \to S_\q$ is faithfully flat. 
It suffices to see that $R_\p$ is Noetherian. If $I_1 \subseteq I_2 \subseteq \ldots$ 
is an ascending chain of ideals in $R_\p$, then the extensions $I_1 S_\q \subseteq 
I_2 S_\q \subseteq \ldots$ stabilize in $S_\q$, but since $I_i = I_i S_\q \cap R_\p$, 
contracting back to $R_\p$ shows that the original chain stabilizes.
\end{proof}

In other words, a ring $R$ has $(*)$ iff the Noetherian locus of $\Spec R$ is nonempty.
More generally, let $P$ be a property of rings. If $P$ is preserved under localization
(i.e. if $R$ has $P$, then $R_\p$ has $P$ for all $\p \in \Spec R$) and is reflected by
faithfully flat maps (i.e. if $R \to S$ is faithfully flat and $S$ has $P$, then so does $R$),
then the proof of \Cref{thmNoethLocus} shows that a ring $R$ admits a flat map to a 
ring that has $P$ iff $R_\p$ has $P$ for some $\p \in \Spec R$ (i.e. the $P$-locus of
$\Spec R$ is nonempty) iff $R_\p$ has $P$ for some $\p \in \Min R$ (i.e. 
$P$ is satisfied in codimension $0$).

Viewed this way, \Cref{thmNoethLocus} is the special case $P = $ Noetherian. There are
many other such properties though: e.g. being reduced, or a domain, or a field; or being 
regular, Gorenstein, or Cohen-Macaulay (cf. \cite{BH}, Prop. 2.1.16, 2.2.21, 3.3.14).

With this, we turn to property $(**)$, which is of a rather different flavor. 
Rings that have $(**)$ are ubiquitous in classical algebraic geometry:

\begin{prop} \label{suffCriteria}
Let $R$ be a ring.
\begin{enumerate}
\item Let $k$ be a field. Then every $k$-algebra has $(**)$. In particular, every ring 
of prime characteristic $p > 0$ has $(**)$.
\item If the additive group $(R, +)$ is torsionfree, then $R$ has $(**)$. In particular, 
if $R$ is a domain, then $R$ has $(**)$.
\end{enumerate}
\end{prop}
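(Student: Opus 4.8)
The plan is to exhibit, in each case, an explicit Noetherian ring $S$ together with the canonical structure map $S \to R$, and to verify flatness using standard criteria. For (1) I would take $S = k$, with $k \to R$ the structure map making $R$ a $k$-algebra. Since $k$ is a field, every $k$-module is free and hence flat; in particular $R$ is flat as a $k$-module, so $k \to R$ is a flat ring map, and $k$ is Noetherian. This gives $(**)$ immediately. For the prime-characteristic consequence, I would note that if $\ch R = p$ with $p$ prime, then the kernel of the canonical map $\Z \to R$ is $p\Z$, so its image is a copy of $\Z/p\Z$; this realizes $R$ as a $(\Z/p\Z)$-algebra, and since $\Z/p\Z$ is a field the first assertion applies.

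For (2) I would take $S = \Z$, with $\Z \to R$ the unique ring map sending $1 \mapsto 1$, whose associated $\Z$-module structure on $R$ is exactly the underlying additive group $(R,+)$. The key input is the standard fact that over the PID $\Z$ a module is flat if and only if it is torsionfree; thus the hypothesis that $(R,+)$ is torsionfree says precisely that $R$ is a flat $\Z$-module, i.e. $\Z \to R$ is flat. Since $\Z$ is Noetherian, $R$ has $(**)$.

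The one point requiring care, and the closest thing here to an obstacle, is the ``domain'' consequence in (2): a domain need not be torsionfree as an abelian group, since any domain with $\ch R = p > 0$ satisfies $p\cdot 1 = 0$, so $(R,+)$ is $p$-torsion and the torsionfree hypothesis fails outright. I would therefore split on the characteristic. If $\ch R = 0$, then $n\cdot 1 \ne 0$ for all $n \ge 1$, and since $R$ is a domain $n\cdot r = (n\cdot 1)r \ne 0$ for every $r \ne 0$; hence $(R,+)$ is torsionfree and the main part of (2) applies. If instead $\ch R = p > 0$, then $p$ is prime because $R$ is a domain, and part (1) applies. Either way $R$ has $(**)$, so the domain statement is genuinely a combination of both parts rather than a direct instance of the torsionfree criterion.
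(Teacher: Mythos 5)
Your proof is correct and follows the same route as the paper: $k \to R$ with every $k$-module free for (1), the flatness-equals-torsionfreeness criterion over the PID $\Z$ for (2), and the same split on characteristic to handle domains (the paper likewise routes characteristic-$p$ domains through part (1) rather than the torsionfree criterion). The extra care you take in spelling out why the domain case genuinely needs both parts is exactly the point the paper's one-line argument is making implicitly.
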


\begin{proof}
(1) Every $k$-module is free, hence flat. If $\ch R = p$, then $\Z/p\Z \subseteq R$.

(2) If $(R, +)$ is torsionfree, then the universal map $\Z \to R$ is flat, as $\Z$ is a 
PID. Every domain either has prime characteristic $p > 0$ (and thus has $(**)$ 
by (1)), or characteristic $0$ (in which case it is $\Z$-torsionfree).
\end{proof}

\begin{rem}
Since every ring is a $\Z$-algebra, i.e. is a quotient of a polynomial ring over $\Z$,
\Cref{suffCriteria}(2) shows that every ring is a quotient of a ring that has $(**)$.
\end{rem}




We now give some necessary conditions for $(**)$, first locally at a minimal 
prime, then in general:

\begin{theorem} \label{infiniteChars}
Let $R$ be a ring. 
\begin{enumerate}
\item If $\p \in \Min R$, then $R_\p$ has $(**)$ iff there is an Artinian 
local subring of $R_\p$ over which $R_\p$ is free.

\item If $R$ has $(**)$, then $\{ \ch R_\p \mid \p \in \Min R \}$ is finite 
(as a subset of $\Z$).
\end{enumerate}
\end{theorem}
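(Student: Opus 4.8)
The plan is to treat the two parts separately, in each case localizing at a minimal prime to obtain a \emph{local}, faithfully flat map, and then exploiting the structure of flat modules over Artinian local rings.

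For part (1), the backward implication is immediate: if $A \subseteq R_\p$ is an Artinian (hence Noetherian) local subring over which $R_\p$ is free, then the inclusion $A \hookrightarrow R_\p$ is flat, so $R_\p$ has $(**)$. For the forward implication, suppose $S \to R_\p$ is flat with $S$ Noetherian. Writing $\m = \p R_\p$ for the maximal ideal and $\q = \m^c$, I would pass to the induced local map $S_\q \to R_\p$, which is faithfully flat. Since $\p \in \Min R$, the ring $R_\p$ has $\m$ as its unique prime, so $\m \in \Min R_\p$; by \Cref{flatFacts}(1) its contraction lies in $\Min S_\q$, and as the map is local this contraction is the maximal ideal of $S_\q$. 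Thus $S_\q$ is a zero-dimensional Noetherian local ring, i.e.\ Artinian. Faithful flatness forces $S_\q \to R_\p$ to be injective, so $S_\q$ is (isomorphic to) a subring of $R_\p$, and it remains only to upgrade flatness to freeness.

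The crux of part (1) is therefore the lemma that a flat module over an Artinian local ring $(A, \m)$ is free, which I expect to be the one genuinely non-formal step. I would prove it directly: since $A$ is Artinian, $\m$ is nilpotent, so Nakayama's lemma applies to arbitrary (not necessarily finitely generated) $A$-modules. Lifting a $k$-basis of $R_\p \otimes_A k$ (where $k = A/\m$) to elements of $R_\p$ gives a surjection $A^{(I)} \twoheadrightarrow R_\p$ with some kernel $K$. Flatness yields $\Tor_1^A(R_\p, k) = 0$, so applying $\_ \otimes_A k$ to $0 \to K \to A^{(I)} \to R_\p \to 0$ keeps it exact; since $A^{(I)} \otimes_A k \to R_\p \otimes_A k$ is an isomorphism by construction, $K \otimes_A k = 0$, i.e.\ $K = \m K$. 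Nakayama then gives $K = 0$, so $R_\p$ is free over $S_\q = A$, completing the equivalence.

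For part (2), I would again localize. Given a flat $S \to R$ with $S$ Noetherian and $\p \in \Min R$, set $\q = \p^c$; by \Cref{flatFacts}(1), $\q \in \Min S$, and the induced local map $S_\q \to R_\p$ is faithfully flat, hence injective. An injective unital ring map preserves characteristic, so $\ch R_\p = \ch S_\q$. Therefore $\{ \ch R_\p \mid \p \in \Min R \} \subseteq \{ \ch S_\q \mid \q \in \Min S \}$, and the latter is finite because a Noetherian ring has only finitely many minimal primes. I expect no serious obstacle here: the content lies entirely in recognizing that contraction carries $\Min R$ into the finite set $\Min S$, and that characteristic is an invariant of the injective local maps $S_\q \hookrightarrow R_\p$.
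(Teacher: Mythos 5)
Your proof is correct and follows essentially the same route as the paper: localize at a minimal prime, use \Cref{flatFacts}(1) to see that the contracted prime is minimal so the local ring of $S$ there is Artinian, use injectivity of the faithfully flat local map, and invoke that flat modules over an Artinian local ring are free. The only difference is that you supply the standard Nakayama/Tor proof of that last fact, which the paper simply cites as known once it observes the maximal ideal is nilpotent.
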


\begin{proof}
Suppose $S \to R$ is flat with $S$ Noetherian. 

(1) Note that $S_{\p^c} \hookrightarrow R_\p$ and $S_{\p^c}$ is Artinian, 
by \Cref{flatFacts}(1). Now the maximal ideal of $S_{\p^c}$ is nilpotent, 
so flat $S_{\p^c}$-modules are free. 

(2) For any $\p \in \Min R$, 
$S_{\p^c} \hookrightarrow R_\p$, so $\ch S_{\p^c} = \ch R_\p$. 
By \Cref{flatFacts}(1), $\{ \ch R_\p \mid \p \in \Min R \} =
\{ \ch S_{\p^c} \mid \p \in \Min R \} \subseteq 
\{ \ch S_\q \mid \q \in \Min S \}$, 
and this is finite since $|\Min S| < \infty$.
\end{proof}

\begin{ex} We illlustrate how $(*)$ and $(**)$ may fail with some counterexamples.
For a ring $R$, set $R[\epsilon^\infty] := R[t_1, \ldots]/(t_it_j \mid i, j \ge 1)$, a 
``thickened" $\Spec R$ in $\mathbb{A}^\infty_R$. 
\begin{enumerate}
\item The ring $R_1 := \Z[\epsilon^\infty]$ has a unique minimal prime 
$\p = (t_1, \ldots)$, and $(R_1)_\p \cong \Q[\epsilon^\infty]$ is not Artinian. 
Thus $R_1$ does not have $(*)$, but it does have $(**)$, as $(R_1, +)$ is 
torsionfree. (One could also take $k[\epsilon^\infty]$ for any field $k$.)

\item Let $R_2 := \Z[x_p]/(px_p) = 
\Z[x_2, x_3, x_5, \ldots]/(2x_2, 3x_3, \ldots)$. If $\q \in \Spec R_2$,
then $px_p \in \q$ for all primes $p \ne 0$ in $\Z$, and also $\q$ contains at most
1 such $p$. Thus the minimal primes of $R_2$ are all of the form 
$\q_p := (p, x_{p'} \mid p' \ne p)$ for $p \in \Z$ prime (including $\q_0 = (x_p)$).
Now $(R_2)_{\q_p} \cong (\Z/p\Z)(x_p)$ (and $(R_2)_{\q_0} \cong \Q$): 
any $p' \ne p$ is inverted, so $x_{p'}$ is sent to $0$, and $x_p$ is inverted, so $p$ 
is sent to $0$. Thus by \Cref{infiniteChars}(2), $R_2$ does not have $(**)$. 
However, $R_2$ is reduced, and any reduced ring has $(*)$, being a field locally at 
any minimal prime. (One could also take $\displaystyle \prod_{p \text{ prime}} \Z/p\Z$.)

\item Combining the two examples above yields a ring $R_3 := R_1 \otimes_\Z R_2 
= \Z[t_i, x_p]/(t_it_j, \; px_p \mid p \in \Z \text{ prime}, \; i, j \ge 1)$ 
that does not have $(*)$ or $(**)$: the minimal primes of $R_3$ are of the form 
$\q_p := (t_i, p, x_{p'} \mid p' \ne p, \, i \ge 1)$, and $(R_3)_{\q_p}$ is either 
$(\Z/p\Z)(x_p)[\epsilon^\infty]$ or $\Q[\epsilon^\infty]$. 
\end{enumerate}

\vspace{0.2cm}
More generally, $R_1$ is free over $\Z$
with basis $\{1, t_i \mid i \ge 1\}$, and if $R$ is any ring, then 
$R \hookrightarrow R_1 \otimes_\Z R = R[\epsilon^\infty]$ is a ``universal" embedding 
into a ring that does not have $(*)$, which induces a homeomorphism on spectra.

Similarly, $R \hookrightarrow R \otimes_\Z R_2$ is also an embedding for any 
ring $R$ (as $\Z \hookrightarrow R_2$ is pure, cf. \cite{L}, Cor. 4.93), and if 
infinitely many primes $p \in \Z$ are nonunits in $R$ (which implies $\ch R = 0$), 
then $R \otimes_\Z R_2$ does not have $(**)$.

Notice that $R_1, R_2, R_3$ are all countable, with connected spectra 
of dimension $1$. 
\end{ex}

It is natural to ask if the converse of \Cref{infiniteChars}(2) holds. 
As it turns out, this is not true -- we shall give a class of counterexamples, 
which are also interesting in their own right:

\begin{mydef}
A ring $R$ is said to be a \textit{square zero extension of} $\Z/2\Z$ if there exists an
$R$-ideal $\m$ such that $|R/\m| = 2$ (i.e. $R/\m \cong \Z/2\Z$) and $\m^2 = 0$. 
\end{mydef}

We list some simple yet appealing properties of square zero 
extensions of $\Z/2\Z$ (here $R^\times$ denotes the group of units of $R$):

\begin{prop} \label{sqZeroProps}
Let $(R, \m)$ be a square zero extension of $\Z/2\Z$.
\begin{enumerate}
\item $\Spec R = \{ \m \}$, and $\ch R = 2$ or $4$.
\item The map $x \mapsto 1 + x$ is a group isomorphism $(\m, +) \xrightarrow{\sim} R^\times$. 
\item For any $u \in R^\times$ and $r \in \m$, $ur = r$. 
\item For any $0 \ne r \in \m$, $0 :_R r = \m$.
\item The subrings of $R$ are precisely the additive subgroups of $(R, +)$ containing $1$,
and every subring is also a square zero extension of $\Z/2\Z$.
\end{enumerate}
\end{prop}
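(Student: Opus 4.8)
The plan is to first pin down the local structure of $R$, from which parts (1)--(4) follow almost immediately, and then to treat the subring characterization (5) as the one substantive step. The starting observation is that every element of $\m$ squares into $\m^2 = 0$ and is therefore nilpotent, so $\m$ lies in the nilradical; since $\m$ is maximal and the nilradical sits inside every prime, this forces $\Spec R = \{\m\}$. In particular $R$ is local with maximal ideal $\m$, and because $R/\m \cong \Z/2\Z$ has exactly two elements, $R$ is partitioned into the cosets $\m$ and $1 + \m$, with $R^\times = R \setminus \m = 1 + \m$. For the characteristic, note $2 \in \m$ (it maps to $0$ in $\Z/2\Z$), hence $4 = 2^2 \in \m^2 = 0$, so $\ch R$ divides $4$; it is not $1$ since $R \ne 0$, giving $\ch R \in \{2, 4\}$.

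Parts (2)--(4) are then short computations exploiting $\m^2 = 0$. For (2), the map $x \mapsto 1 + x$ lands in $R^\times$ since $(1+x)(1-x) = 1 - x^2 = 1$; it is a homomorphism because the cross term vanishes, $(1+x)(1+y) = 1 + x + y + xy = 1 + (x+y)$; it is injective as $1 + x = 1$ forces $x = 0$; and it is surjective precisely because $R^\times = 1 + \m$ from the previous step. For (3), writing $u = 1 + x$ gives $ur = r + xr = r$ as $xr \in \m^2 = 0$. For (4), $\m \subseteq 0 :_R r$ again by $\m^2 = 0$, while any $a \notin \m$ is a unit with $ar = r \ne 0$ by (3), giving the reverse inclusion.

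The heart of the proposition is (5), and the main obstacle is verifying that an arbitrary additive subgroup $A \subseteq (R,+)$ with $1 \in A$ is automatically closed under multiplication (the forward direction, that subrings are such subgroups, is immediate). My plan is a case analysis using the partition $R = \m \sqcup (1 + \m)$. When both factors lie in $\m$, their product is in $\m^2 = 0 \in A$; when exactly one factor $a$ lies in $\m$, the identity $a(1+n) = a$ (from $\m^2 = 0$) shows the product equals an element already in $A$. The key identity is for two units, $(1+m)(1+n) = 1 + m + n = a + b - 1$, so the product is recovered from $a, b, 1 \in A$ using only the additive group structure. This is the step I expect to carry the real content, since it is exactly where the square-zero hypothesis converts multiplication into addition.

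Finally, to see each subring $S$ is itself a square zero extension of $\Z/2\Z$, I would set $\n := S \cap \m$. This is an ideal of $S$ (an additive subgroup, closed under multiplication by $S$ since $\m$ is an $R$-ideal and $S$ a subring), it satisfies $\n^2 \subseteq \m^2 = 0$, and the second isomorphism theorem identifies $S/\n$ with the image of $S$ in $R/\m \cong \Z/2\Z$, which is all of $R/\m$ because $1 \in S$ maps to the nonzero element. Hence $|S/\n| = 2$ and $(S, \n)$ is again such an extension.
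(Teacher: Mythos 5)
Your proof is correct and follows essentially the same route as the paper (which leaves part (2) as an exercise and gives only one-line hints for the rest). The one place you add genuine content is the converse half of (5) — that an additive subgroup containing $1$ is automatically multiplicatively closed, via the identity $(1+m)(1+n) = a + b - 1$ — which the paper's proof omits entirely; your case analysis there is sound.
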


\begin{proof}
(1): $\ch R/\m = 2 \implies 2 \in \m \implies 4 = 0$. 
(2): Exercise. 
(3): $u - 1 \in \m$.
(4): $r \m \subseteq \m^2 = 0$. 
(5): If $S \subseteq R$, then $S/\m \cap S
\hookrightarrow R/\m$ and $(\m \cap S)^2 \subseteq \m^2$.
\end{proof}

To relate this to $(**)$, we need the following lemma on Tor:

\begin{lemma} \label{torProp}
Let $R$ be a ring, $x \in R$, $M$ an $R$-module. Then $\Tor_1^R(R/(x), M) \cong
(0 :_M x)/((0 :_R x)M)$.
\end{lemma}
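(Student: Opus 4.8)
The plan is to compute $\Tor_1^R(R/(x),M)$ from the short exact sequence of $R$-modules
$0 \to (x) \to R \to R/(x) \to 0$,
rather than attempting to write down a full free resolution of $R/(x)$ (the latter would require control of the generators of $0:_R x$, which is exactly what we want to avoid). Applying $- \otimes_R M$ and invoking the long exact sequence in $\Tor$, the term $\Tor_1^R(R,M)$ vanishes since $R$ is free, so the sequence collapses to an isomorphism
$\Tor_1^R(R/(x),M) \cong \ker\big((x)\otimes_R M \to M\big)$,
where the map is induced by the inclusion $(x)\hookrightarrow R$ followed by the canonical identification $R\otimes_R M\cong M$.

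Next I would identify the principal ideal $(x)$ with $R/(0:_R x)$ via the surjection $R\to (x)$, $r\mapsto rx$, whose kernel is by definition $0:_R x$. Tensoring this isomorphism with $M$ yields $(x)\otimes_R M \cong M/\big((0:_R x)M\big)$, and the point that needs care is to verify what the map to $M$ becomes under this identification. Chasing the generator $x\otimes m$ (which corresponds to the class of $m$ in $M/\big((0:_R x)M\big)$) forward to $xm\in M$, one sees that the relevant map is simply multiplication by $x$, namely $\overline{m}\mapsto xm$; well-definedness of this map is precisely the statement that $(0:_R x)M$ is annihilated by $x$. Thus I am reduced to computing $\ker\big(M/\big((0:_R x)M\big)\xrightarrow{\,\cdot x\,} M\big)$.

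It then remains to run a short computation: a class $\overline{m}$ lies in this kernel exactly when $xm=0$, i.e. when $m\in 0:_M x$, so the kernel is the image of $0:_M x$ in $M/\big((0:_R x)M\big)$. The final step is the observation that $(0:_R x)M\subseteq 0:_M x$ (indeed $a\in 0:_R x$ forces $x(am)=(xa)m=0$), which identifies that image with the quotient $(0:_M x)/\big((0:_R x)M\big)$, as claimed. The only genuine obstacle here is bookkeeping rather than mathematics: every individual step is routine, but one must pin down the induced map through the isomorphism $(x)\cong R/(0:_R x)$ carefully enough to be certain it is honest multiplication by $x$, after which the two inclusions $(0:_R x)M\subseteq 0:_M x$ and the kernel description combine to give the result immediately.
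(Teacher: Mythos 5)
Your proposal is correct and is essentially the paper's argument: the paper tensors the short exact sequence $0 \to R/(0 :_R x) \xrightarrow{x} R \to R/(x) \to 0$ with $M$, which is exactly your sequence $0 \to (x) \to R \to R/(x) \to 0$ after the identification $(x) \cong R/(0 :_R x)$ that you carry out as a separate step. The resulting computation of $\ker\bigl(M/(0 :_R x)M \xrightarrow{x} M\bigr)$ is identical.
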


\begin{proof}
The long exact sequence for $\_ \otimes_R M$ applied to $0 \to R/(0 :_R x) \xrightarrow{x}
R \to R/(x) \to 0$ gives that $\Tor_1^R(R/(x), M) = \ker(M/(0 :_R x)M \xrightarrow{x} M)$, 
as desired.
\end{proof}

\begin{theorem} \label{sqZeroThm}
Let $(R, \m)$ be a square zero extension of $\Z/2\Z$. 
\begin{enumerate}
\item $R$ has $(*)$ iff $R$ is Noetherian iff $R$ is finite. In this case $|R| = 2^{n+1}$,
where $n := \mu_R(\m)$ denotes the minimal number of generators of $\m$. 
\item $R$ has $(**)$ iff $\ch R = 2$ or $R$ is Noetherian.
\end{enumerate}
\end{theorem}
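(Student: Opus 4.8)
The plan is to handle the two parts separately, leaning on the structural results already established. For (1), \Cref{sqZeroProps}(1) gives $\Spec R = \{\m\}$, so $R$ is local with $R = R_\m$; hence \Cref{thmNoethLocus} immediately yields that $R$ has $(*)$ iff $R = R_\m$ is Noetherian. It then remains to match Noetherian with finite. Since $\m^2 = 0$, the ideal $\m$ is an $R/\m = \Z/2\Z$-vector space, so $R$ is Noetherian iff $\m$ is finitely generated iff $\dim_{\Z/2\Z}\m < \infty$ iff $\m$ (and thus $R$) is finite. In the finite case $n = \mu_R(\m) = \dim_{\Z/2\Z}(\m/\m^2) = \dim_{\Z/2\Z}\m$, so $|\m| = 2^n$ and $|R| = |R/\m|\cdot|\m| = 2^{n+1}$.

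The two easy implications in (2) are quick: if $R$ is Noetherian it has $(**)$ via the identity map, and if $\ch R = 2$ then $\Z/2\Z \subseteq R$, so $R$ has $(**)$ by \Cref{suffCriteria}(1). The content is the converse, so I assume $R$ has $(**)$ with $\ch R \ne 2$ and must prove $R$ is Noetherian. By \Cref{sqZeroProps}(1) the only remaining possibility is $\ch R = 4$, so $2$ is a nonzero element of $\m$. Because $\m$ is the unique (hence minimal) prime and $R = R_\m$, \Cref{infiniteChars}(1) furnishes an Artinian local subring $A \subseteq R$ over which $R$ is free, in particular flat. By \Cref{sqZeroProps}(5) this $A$ is again a square zero extension of $\Z/2\Z$, and being Artinian it is finite by part (1); moreover $A \supseteq \Z\cdot 1 \cong \Z/4\Z$, so $\ch A = 4$ and $2$ is a nonzero element of $\m_A := \m \cap A$.

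The crux is to convert flatness of $R$ over the \emph{finite} ring $A$ into finiteness of $\m$, by testing against the element $2$. I would apply \Cref{torProp} over the ring $A$ with $x = 2$ and $M = R$, obtaining $\Tor_1^A(A/(2), R) \cong (0 :_R 2)/((0 :_A 2)R)$. Here \Cref{sqZeroProps}(4) identifies the numerator as $0 :_R 2 = \m$, and the same result over $A$ gives $0 :_A 2 = \m_A$; a short computation using \Cref{sqZeroProps}(3) (and $\m^2 = 0$) shows $\m_A R = \m_A$, since for $a \in \m_A$ and $r \in R$ one has $ar \in \m^2 = 0$ when $r \in \m$ and $ar = a$ when $r$ is a unit. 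Flatness forces this $\Tor$ to vanish, so $\m = \m_A$, which is finite because $A$ is; hence $\m$, and therefore $R$, is finite and so Noetherian. The main obstacle is exactly this last step: recognizing that multiplication by $2$ is the correct test element and that the resulting $\Tor$ group is precisely $\m/\m_A$, so that flatness collapses all of $\m$ onto the finite subgroup $\m_A$.
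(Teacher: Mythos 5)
Your proof is correct and follows essentially the same route as the paper: part (1) via locality and a $\Z/2\Z$-vector space count, and part (2) by reducing to an Artinian local subring and using \Cref{torProp} together with \Cref{sqZeroProps}(4) to show flatness forces $\m$ to coincide with (the paper: be generated by) the finite ideal $\m \cap A$. The only cosmetic difference is that you specialize the test element to $x = 2$ under the assumption $\ch R = 4$, whereas the paper takes an arbitrary nonzero $x \in \m \cap S$ and treats $\m \cap S = 0$ as the $\ch R = 2$ case.
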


\begin{proof}
(1): Suppose $R$ is Noetherian, i.e. $n < \infty$. Writing $\m = (f_1, \ldots, f_n)$,
the elements of $\m$ are precisely of the form $a_1f_1 + \ldots + a_nf_n$, with
$a_i \in \{0, 1\}$ (since any element of $\m$ is of the form $\sum r_if_i$ for some
$r_i \in R$, and one may write $r_i = a_i + g_i$, where $a_i \in \{0,1\}$
and $g_i \in \m$). There are $2^n$ such elements, and each of these are distinct 
in $R$ by minimality of $n$. Thus $|R| = 2|\m| = 2^{n+1}$.

(2): If $R$ has $(**)$, then there exists a subring $S \subseteq R$ with $S$ Noetherian
and $R$ flat over $S$. If $\m \cap S = 0$, then $S = \Z/2\Z$, so $\ch R = 2$. 
Otherwise, pick $0 \ne x \in \m \cap S$. Now $0 = \Tor_1^S(S/(x), R) \implies 0 :_R (x)
= (0 :_S (x))R$ by \Cref{torProp}. But $0 :_R (x) = \m$ and $0 :_S (x) = \m \cap S$ by 
\Cref{sqZeroProps}(4), so $\m = (\m \cap S)R$ is finitely generated over $R$, as 
$\m \cap S$ is finitely generated over $S$.
\end{proof}

\begin{ex}
Let $R_4 := \Z/4\Z[x_1, \ldots]/(2x_i, x_ix_j \mid i, j \ge 1)$, or equivalently
\[ R_4 \cong \Z[x_0, x_1, \ldots]/(x_0 - 2, x_ix_j \mid i, j \ge 0) \cong R_1/(t_1 - 2). \] 
Then $R_4$ is an infinite ($\!\!\implies\!\!$ non-Noetherian) square 
zero extension of $\Z/2\Z$ of characteristic $4$, hence does not have $(*)$ or $(**)$ by 
\Cref{sqZeroThm}.
\end{ex}

\vskip 2ex


\begin{thebibliography}{10}


\bibitem{BH}
Bruns, W. and Herzog, J.
\newblock {\em Cohen-{M}acaulay rings}.
\newblock Cambridge Studies in Advanced Mathematics,
Vol. 39, Cambridge University Press, Cambridge (1993).

\bibitem{L}
Lam, T. Y.
\newblock {\em Lectures on modules and rings}.
\newblock Graduate Texts in Math., Vol. 189, Springer-Verlag (1999).

\end{thebibliography}
\end{document}